\newtheorem{theorem}{Theorem}
\newtheorem{conclusion}{Conclusion}
\newtheorem{example}{Example}
\newenvironment{proof}[1][Proof]{\noindent\textbf{#1.} }{\ \rule{0.5em}{0.5em}}
\begin{document}

\title{Viviani's Theorem and its Extensions Revisited;\\
Canghareeb, Sanghareeb and a New Definition of the Ellipse}
\author{}
\maketitle

A two dimensional outsider called "\textit{Canghareeb}"\footnote{%
The word "Canghareeb" comes from Arabic, and is an abbreviation of two words
"caen ghareeb" which means outsider.}, consisting of a head and three legs,
lives inside a triangle under the following conditions;

\begin{itemize}
\item For better balance and movement he puts its legs on the sides of the
triangle perpendicularly.

\item He can extend one leg or two legs at the expense of the others so that 
\textbf{the sum of the lengths of the three legs stays constant}.

\item He can not bend its legs.
\end{itemize}

The following questions arise; Where does Canghareeb live inside the
triangle? Are there necessary conditions for Canghareeb to live inside a
given triangle? What happens if the triangle is equilateral, isosceles or
scalene?

In Fig. \ref{fig1}, Canghareeb is illustrated by a point (head) and three
perpendiculars (legs). The questions are intended to describe the set of
points formed by the traces of the head when Canghareeb moves inside the
triangle.

These questions are related to Viviani's theorem and its extensions (see 
\cite{Abb}), as will be illustrated.

\section*{Canghareeb}

We shall rely on the following basic theorem (see \cite{Abb} Theorems 1 and
2) to conclude the results in this section. At first recall the following
terminology: Let $\mathcal{P}$ be a polygon consisting of both boundary and
interior points. Define the \textit{distance sum function} $\mathcal{V}:%
\mathcal{P}\rightarrow \mathbb{R},$ where for each point $P\in \mathcal{P}$ $%
\ $the value $\mathcal{V}(P)$ is defined as the sum of the distances from $P$
to the sides of $\mathcal{P}$.

\begin{theorem}
\label{equilateral}Any triangle can be divided into parallel line segments
on which $\mathcal{V}$ is constant. Furthermore, the following conditions
are equivalent:

\begin{itemize}
\item $\mathcal{V}$ is constant on\textit{\ }the\textit{\ triangle }$%
\triangle ABC.$

\item \textit{There are three non-collinear points, inside the triangle, at
which }$\mathcal{V}$ takes the same value\textit{.}

\item \textit{\ }$\triangle ABC$ is \textit{equilateral. }
\end{itemize}
\end{theorem}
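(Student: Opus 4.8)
The plan is to show first that $\mathcal{V}$ is an \emph{affine} function of position on the triangle, from which both the parallel-segment structure and all three equivalences follow quickly. Fix Cartesian coordinates and write each side of $\triangle ABC$ as the zero set of a normalized linear form, so that for a point $P=(x,y)$ the signed distance to side $a$ is $\ell_a(P)=\hat n_a\cdot P-c_a$, where $\hat n_a$ is the inward unit normal, and similarly for $b,c$. For $P$ in the closed triangle every signed distance is nonnegative and equals the true distance, so
\[
\mathcal{V}(P)=\ell_a(P)+\ell_b(P)+\ell_c(P)=(\hat n_a+\hat n_b+\hat n_c)\cdot P-(c_a+c_b+c_c),
\]
which is affine with constant gradient $\nabla\mathcal{V}=\hat n_a+\hat n_b+\hat n_c$. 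The level sets of a nonconstant affine function are parallel lines orthogonal to $\nabla\mathcal{V}$; intersecting them with the triangle yields the parallel segments on which $\mathcal{V}$ is constant, which proves the first assertion. (The area identity $a\,d_a+b\,d_b+c\,d_c=2\,[\triangle ABC]$ provides an independent check that $\mathcal{V}$ genuinely varies unless the side lengths are equal.)

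For the equivalences, note that the first condition trivially implies the second, since constancy on the whole triangle forces equal values at any three points. For the reverse, I would invoke that an affine function on the plane is determined by its values at three non-collinear points; if those three values coincide, then $\mathcal{V}$ agrees with the corresponding constant function at three independent points and hence everywhere on the triangle. Thus the first two conditions are each equivalent to the single statement $\nabla\mathcal{V}=\mathbf 0$, that is, $\hat n_a+\hat n_b+\hat n_c=\mathbf 0$.

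It remains to identify this vanishing condition with the equilateral case. A short lemma handles the vector algebra: if three unit vectors sum to zero, then writing $\hat n_c=-(\hat n_a+\hat n_b)$ and squaring gives $\hat n_a\cdot\hat n_b=-\tfrac12$, so the three normals are mutually inclined at $120^\circ$. The geometric link is that the inward normals to the two sides meeting at a vertex make an angle equal to $\pi$ minus the interior angle there; hence mutual $120^\circ$ angles force every interior angle to equal $60^\circ$, so the triangle is equilateral. Conversely, in an equilateral triangle the three inward normals are permuted by a $120^\circ$ rotation, so their sum is fixed by that rotation and must be $\mathbf 0$; equivalently, Viviani's theorem gives $\mathcal{V}\equiv h$.

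The conceptual crux is the opening observation that the sum of \emph{unsigned} distances is affine on the whole triangle; once this is secured, the parallel-segment claim and the chain (constant) $\Leftrightarrow$ (three equal values) $\Leftrightarrow$ ($\nabla\mathcal{V}=\mathbf 0$) $\Leftrightarrow$ (equilateral) are routine. I expect the only delicate bookkeeping to be the sign conventions guaranteeing that each signed distance is nonnegative inside the triangle, and the verification that $\nabla\ell_a$ is the inward unit normal rather than its negative.
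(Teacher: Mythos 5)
Your proof is correct, but there is nothing in the paper to compare it against: the paper does not prove Theorem \ref{equilateral} at all, it simply imports it from \cite{Abb} (``see \cite{Abb} Theorems 1 and 2''). Your argument is nevertheless very much in the spirit of the machinery the paper relies on elsewhere --- the later assertion that ``the distance sum function $\mathcal{V}$ is a linear continuous function in two variables'' is precisely your opening observation, and your reduction of all three equivalences to the single condition $\hat n_a+\hat n_b+\hat n_c=\mathbf 0$ is the same vector idea behind the Samelson and Chen--Liang proofs cited in the concluding remarks \cite{Sam,CL}. What your write-up buys is a self-contained chain: affinity of $\mathcal{V}$ gives the parallel level segments; the fact that an affine function is determined by its values at three non-collinear points gives (three equal values) $\Rightarrow$ (constant); and the unit-vector lemma $\hat n_a\cdot\hat n_b=-\tfrac12$, combined with the observation that the inward normals at a vertex subtend $\pi$ minus the interior angle, gives (constant) $\Leftrightarrow$ (equilateral). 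Two details deserve an explicit sentence. First, ``distance to a side'' must be read as distance to the line containing the side (as the Canghareeb/Sanghareeb setup intends, legs being placed on the sides \emph{or their extensions}); for an obtuse triangle the foot of the perpendicular from an interior point can fall outside a side, and taking distance to the closed segment would destroy affinity. Second, your phrase ``level sets of a nonconstant affine function'' silently omits the equilateral case of the first assertion, where $\mathcal{V}$ is constant and any family of parallel chords works; that case is trivial, but it should be mentioned since the opening sentence of the theorem claims the decomposition for \emph{every} triangle.
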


\begin{theorem}
\label{convex polygon}(a) Any convex polygon $\mathcal{P}$ can be divided
into parallel line segments, on which $\mathcal{V}$ is constant. \ 

(b) If $\ \mathcal{V}$ takes equal values at three non-collinear points,
inside a convex polygon, then $\mathcal{V}$ is constant on $\mathcal{P}$ .
\end{theorem}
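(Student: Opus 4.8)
The plan is to show that, on a convex polygon, the distance sum $\mathcal{V}$ is the restriction of a single \emph{affine} function of the point, after which both parts follow almost immediately. First I would fix the $n$ lines $\ell_1,\dots,\ell_n$ carrying the sides of $\mathcal{P}$ and, for each $i$, write $\ell_i$ in the normalized form $a_i x + b_i y + c_i = 0$ with $a_i^2 + b_i^2 = 1$, the sign chosen so that the inward unit normal is $\mathbf{n}_i = (a_i, b_i)$. The perpendicular distance from a point $P=(x,y)$ to $\ell_i$ is $|a_i x + b_i y + c_i|$; here convexity does the essential work, since the entire interior of $\mathcal{P}$ lies in the half-plane $a_i x + b_i y + c_i \ge 0$ for every $i$, so all the absolute values may be dropped simultaneously. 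Summing,
\[
\mathcal{V}(P) = \sum_{i=1}^{n} (a_i x + b_i y + c_i) = A x + B y + C,
\qquad (A,B) = \sum_{i=1}^{n} \mathbf{n}_i,
\]
so $\mathcal{V}$ is affine on $\mathcal{P}$ with gradient $(A,B)$ equal to the sum of the inward unit normals.

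For part (a), I would split into two cases. If $(A,B) \ne (0,0)$, the level sets $\{\mathcal{V} = t\}$ are exactly the lines perpendicular to $(A,B)$, a family of parallel lines; intersecting each with the convex region $\mathcal{P}$ yields a (possibly degenerate) line segment on which $\mathcal{V}$ is constant, and these segments sweep out all of $\mathcal{P}$ as $t$ varies. If instead $(A,B) = (0,0)$, then $\mathcal{V} \equiv C$ on $\mathcal{P}$, and one may slice $\mathcal{P}$ into parallel segments in any direction; the constancy statement holds trivially. Either way $\mathcal{P}$ is partitioned into parallel segments along which $\mathcal{V}$ is constant.

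For part (b), suppose $\mathcal{V}$ takes a common value $k$ at three non-collinear interior points $P_1, P_2, P_3$. Since $\mathcal{V}$ is affine, the solution set of $\mathcal{V}(P) = k$ is either a single line (when $(A,B) \ne (0,0)$) or the whole plane (when $(A,B) = (0,0)$ and $C = k$). A single line cannot contain three non-collinear points, so the first alternative is impossible; hence $(A,B) = (0,0)$ and $\mathcal{V} \equiv C$ is constant on $\mathcal{P}$. This specializes, for $n = 3$, to the triangle characterization of Theorem \ref{equilateral}, since $\sum \mathbf{n}_i = \mathbf{0}$ forces the three inward unit normals into mutual $120^\circ$ position, hence all interior angles to equal $60^\circ$.

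The only genuine obstacle is securing the first step: one must be certain that the single expression $A x + B y + C$ computes the true unsigned distance sum throughout $\mathcal{P}$, and this is exactly where convexity is indispensable. For a nonconvex polygon the interior straddles some edge line, an absolute value genuinely changes sign, and $\mathcal{V}$ becomes only piecewise affine, so the clean level-set description collapses. Once affineness is established, parts (a) and (b) are routine consequences of elementary facts about affine functions and their level sets.
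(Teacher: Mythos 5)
Your proof is correct, and it follows exactly the route the paper relies on: the paper states this theorem without proof (deferring to Theorems 1 and 2 of \cite{Abb}), but it explicitly invokes the same key fact you establish, namely that on a convex polygon $\mathcal{V}$ is an affine (linear) function of the point because convexity lets every absolute value in the point--line distance formulas be dropped with a consistent sign. Your level-set argument for (a) and the three-non-collinear-points argument for (b) are the standard consequences of that affineness, matching the paper's intended reasoning.
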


\subsection*{Equilateral triangle}

By Viviani's theorem; the sum of the distances from the sides of any point
inside an equilateral triangle is constant. The constant sum is the height
of the equilateral triangle. Thus we have the following conclusion:

\begin{conclusion}
If the total length of the legs of Canghareeb equals the height of an
equilateral triangle then Canghareeb can live inside the triangle and move
freely from one point inside the triangle to another.
\end{conclusion}

\subsection*{Isosceles triangle}

Since an isosceles triangle has a reflection symmetry across the height we
conclude that if Canghareeb can reach a point $P$ inside the triangle then
he can reach the reflection point $P^{\prime }$ across the height. Thus we
have:

\begin{conclusion}
Canghareeb can live on a line segment parallel to the base of an isosceles
triangle provided the total length of its legs ranges between the lengths of
the smallest and the largest altitudes of the triangle.
\end{conclusion}

\begin{proof}
If Canghareeb can reach point $P$ inside the triangle then he can reach the
reflection point $P^{\prime }$ across the height. Indeed, In Fig. \ref{fig2}%
, if the line segment $DF$ is parallel to the base $BC$ then when $P$ moves
along $DF$, the length $a$ remains constant and $b+c=DP\sin \alpha +PE\sin
\alpha =DE\sin \alpha $ which is also constant. By Theorem \ref{equilateral}%
, Canghareeb can not live at any other point inside the triangle unless it
is equilateral.
\end{proof}

\subsection*{Scalene triangle}

Similarly, by Theorem \ref{convex polygon}, Canghareeb can live inside a
scalene triangle, provided the following necessary condition is satisfied:
the total length of its legs ranges between the lengths of the smallest and
the largest altitudes of the triangle. This is true due to the fact that the
distance sum function $\mathcal{V}$ is a linear continuous function in two
variables hence; \textbf{it takes\ on every value between its minimum and
its maximum}.

Thus we have:

\begin{conclusion}
Canghareeb can live on a line segment inside a scalene triangle provided the
total length of its legs ranges between the lengths of the smallest and the
largest altitudes of the triangle.
\end{conclusion}

The question is how can we determine this segment for a general triangle?

We shall illustrate the method explained in \cite{Abb} by the following
example:

\begin{example}
Let ABC be the right angled triangle with vertices $(0,0),(0,3),(4,0)$
respectively (see Fig. \ref{fig3}). If the total length of Canghareeb's legs
is $l,$ $3\leq l\leq $ $4,$ then Canghareeb can live on a line segment
inside the triangle parallel to the line $2x+y=0$.

In Fig. \ref{fig3}, $l=3.16743$ and the living area is the line segment $HI.$
\end{example}

\begin{proof}
The smallest altitude of the triangle is $3$ and the largest altitude is $4.$
Hence by the previous conclusion, Canghareeb can live inside the triangle
across a line segment. To find this line segment, we compute the distance
sum function $\mathcal{V};$ The equation of the hypotenuse is $3x+4y=12.$
Hence,

\begin{equation*}
\mathcal{V}=x+y-\frac{3x+4y-12}{5}=\frac{2}{5}x+\frac{1}{5}y-\frac{12}{5}.
\end{equation*}

Therefore, the lines $\mathcal{V}=c$ are parallel to the line $2x+y=0$ and
the result follows.
\end{proof}

\ The following example will be left to the reader but at the end of the
paper a clue will be given.

\begin{example}
\label{Ex4}One Canghareeb proclaimed that he can reach the three points $%
(-2,0),(\frac{1}{2},\frac{1}{3}),(2,0)$ inside a triangle. Show that the
triangle is equilateral. If the coordinates of two of the vertices are $%
(-2,0),(2,0)$ find the coordinates of the third vertex of the triangle.
\end{example}

\section*{Sanghareeb}

Motivated by Canghareeb's story, another mysterious creature called \textit{%
Sanghareeb} lives in the plane relative to a triangle with the following
conditions;

\begin{itemize}
\item He puts its legs on the sides of the triangle or their extensions
perpendicularly.

\item He can extend one leg or two legs at the expense of the others so that 
\textbf{the} \textbf{sum of the squares of the lengths of the three legs
stays constant}.

\item He can not bend its legs.
\end{itemize}

One day Sanghareeb met Canghareeb inside the triangle $ABC$ shown in Fig. %
\ref{fig3}, with vertices $(0,0),(0,3),(4,0)$. The total length of
Canghareeb's legs was $3.16743$ and the sum of the squares of the lengths of
Sanghareeb's legs was $5$. Sanghareeb boasted to Canghareeb that he can
reach points outside the triangle. When the latter asked about the former's
secret, Sanghareeb showed Canghareeb the journey he could make. In fact
Sanghareeb walked on a closed smooth curve, called the \textit{ellipse}. He
made the following mathematics;

Referring to Fig. \ref{fig3}, $QR^{2}+QS^{2}+QT^{2}=5$ implies the following
equation of a quadratic curve;

\begin{equation*}
x^{2}+y^{2}+\left( \frac{3x+4y-12}{5}\right) ^{2}=5.
\end{equation*}

Simplifying, one gets the equivalent equation;

\begin{equation*}
34x^{2}+41y^{2}+24xy-72x-96y+19=0.
\end{equation*}

This is exactly the equation of the ellipse shown in Fig. \ref{fig3} and
Sanghareeb could meet Canghareeb at exactly two points $L$ and $J$ inside
the triangle.

\section*{A new definition of the ellipse}

In view of the previous discussion we may state the following theorem, which
gives a new definition of the ellipse;

\begin{theorem}
An ellipse is the locus of points which have a constant sum of squares of
distances from the sides of a given triangle.
\end{theorem}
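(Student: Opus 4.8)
The plan is to turn the verbal description of the locus into a single quadratic equation and then show that its quadratic part is positive definite, which by the standard classification of conics forces the curve to be an ellipse. First I would fix a Cartesian frame and write the three lines carrying the sides of the triangle in normalized form $\ell_i : a_i x + b_i y + c_i = 0$ with $a_i^2 + b_i^2 = 1$ for $i = 1,2,3$, so that the distance from a point $P = (x,y)$ to $\ell_i$ is exactly $|a_i x + b_i y + c_i|$. The locus is then the level set
\begin{equation*}
f(x,y) := \sum_{i=1}^{3} (a_i x + b_i y + c_i)^2 = k
\end{equation*}
for a constant $k \ge 0$. Expanding yields a general second-degree equation $Ax^2 + 2Bxy + Cy^2 + Dx + Ey + F = 0$, where $A = \sum a_i^2$, $B = \sum a_i b_i$, $C = \sum b_i^2$, and the linear and constant coefficients are read off in the same way.

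The crucial step is to verify that the quadratic part is positive definite. Since $A = \sum a_i^2 > 0$, it suffices to show the discriminant $AC - B^2 > 0$. Here I would invoke Lagrange's identity for the vectors $\mathbf{a} = (a_1, a_2, a_3)$ and $\mathbf{b} = (b_1, b_2, b_3)$:
\begin{equation*}
AC - B^2 = \Big(\sum a_i^2\Big)\Big(\sum b_i^2\Big) - \Big(\sum a_i b_i\Big)^2 = \sum_{1 \le i < j \le 3} (a_i b_j - a_j b_i)^2 \ge 0,
\end{equation*}
with equality precisely when $\mathbf{a}$ and $\mathbf{b}$ are proportional. Proportionality would force every normal $(a_i, b_i)$ to be a common multiple of a single direction, i.e. all three sides to be mutually parallel, which is impossible for a nondegenerate triangle (no two sides of a triangle are parallel). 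Hence $AC - B^2 > 0$ strictly, and the symmetric matrix $\left(\begin{smallmatrix} A & B \\ B & C\end{smallmatrix}\right)$ is positive definite.

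Finally I would read off the geometry. Because $f$ is a sum of squares of affine functions whose Hessian is the positive-definite form above, $f$ is a strictly convex quadratic attaining a unique minimum $m$ at a single point $P_0$. Consequently the level set $f = k$ is empty for $k < m$, reduces to the point $P_0$ for $k = m$, and is a genuine ellipse for every $k > m$; completing the square in the eigenbasis of the quadratic part brings the equation to the standard form $\lambda_1 u^2 + \lambda_2 v^2 = k - m$ with $\lambda_1, \lambda_2 > 0$. This shows that, for every admissible value of the constant, the locus is exactly an ellipse. The main obstacle is the strict inequality $AC - B^2 > 0$: once the nondegeneracy of the triangle is correctly translated into the non-proportionality of the two normal-coordinate vectors $\mathbf{a}$ and $\mathbf{b}$, the remainder is routine expansion together with the classical conic classification.
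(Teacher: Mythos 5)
Your argument for the direction you do address is sound, and in fact it tightens the paper's own treatment of that direction: the paper places the vertices at $A(0,a)$, $B(-b,0)$, $C(c,0)$, writes out the sum of squared distances explicitly, and merely asserts that the discriminant condition $B^{2}-4AC<0$ ``is easily seen,'' whereas your normalized line equations combined with Lagrange's identity, with equality excluded because no two sides of a triangle are parallel, give a clean, coordinate-free verification; you also correctly flag the degenerate level sets (empty for $k$ below the minimum, a single point at the minimum).

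However, there is a genuine gap: the theorem is stated as a characterization --- an ellipse \emph{is} the locus of such points --- and the paper's proof explicitly splits it into two claims: (a) every such locus is an ellipse, and (b) every ellipse arises as such a locus for a suitable triangle and constant. You prove only (a). The converse (b) is not automatic and is needed for the ``new definition'' to be legitimate: one must exhibit, for an arbitrary ellipse $\frac{x^{2}}{\alpha^{2}}+\frac{y^{2}}{\beta^{2}}=1$ with $\alpha\geq\beta$, a triangle and a value of $k$ realizing it. The paper does this by taking an isosceles triangle $A(0,a)$, $B(-b,0)$, $C(b,0)$, showing the level set is a translate of $\frac{x^{2}}{a^{2}+3b^{2}}+\frac{y^{2}}{2a^{2}}=1$ for an explicit choice of $k$, and then matching semi-axes by solving $2a^{2}=\beta^{2}$ and $a^{2}+3b^{2}=\alpha^{2}$, which is possible because $2\alpha^{2}>\beta^{2}$. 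Without some such construction, your proof establishes only that these loci are ellipses, not that every ellipse occurs, so half of the stated theorem remains unproved.
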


\begin{proof}
We shall prove the following two claims;

(a) Given a triangle, the locus of points which have a constant sum of
squares of distances from the sides is an ellipse.

(b) Given an ellipse, there is a triangle for which the sum of the squares
of the distances from the sides for all points on the ellipse is constant.

Using analytic geometry, we choose for part (a) the coordinate system such
that the vertices of the triangle lie on the axes.

Suppose the coordinates of the vertices of the triangle are $%
A(0,a),B(-b,0),C(c,0),$ where $a,b,c>0$ (see Fig. \ref{fig4}). Let $(x,y)$
be any point in the plane and let $d_{1},d_{2},d_{3}$ be the distances of $%
(x,y)$ from the sides of the triangle $ABC$. Clearly we have

\begin{equation*}
\underset{i=1}{\overset{3}{\sum }}d_{i}^{2}=\frac{(ax+cy-ac)^{2}}{a^{2}+c^{2}%
}+\frac{(ax-by+ab)^{2}}{a^{2}+b^{2}}+y^{2}.
\end{equation*}

Hence, $\underset{i=1}{\overset{3}{\sum }}d_{i}^{2}=k$ (constant) if and
only if the point $(x,y)$ lies on the quadratic curve 
\begin{equation}
\frac{(ax+cy-ac)^{2}}{a^{2}+c^{2}}+\frac{(ax-by+ab)^{2}}{a^{2}+b^{2}}%
+y^{2}=k.  \label{eq1}
\end{equation}

In general, a quadratic equation in two variables;

\begin{equation*}
Ax^{2}+Bxy+Cy^{2}+Dx+Ey+F=0,
\end{equation*}

represents an ellipse provided that the discriminant%
\begin{equation*}
\Delta =B^{2}-4AC<0.
\end{equation*}

It is easily seen that this is the case for equation (\ref{eq1}).

For part (b), after rotation or translation of the axes we may assume that
the ellipse has a canonical form; $\frac{x^{2}}{\alpha ^{2}}+\frac{y^{2}}{%
\beta ^{2}}=1,$ $\alpha \geq \beta .$

Because of the symmetry of the ellipse, we shall take an isosceles triangle
with vertices $A(0,a),B(-b,0),C(b,0)$ and compute the sum of the squares of
the distances from its sides. Substituting b=c in equation \ref{eq1}, we have

\begin{equation*}
\frac{(ax+by-ab)^{2}}{a^{2}+b^{2}}+\frac{(ax-by+ab)^{2}}{a^{2}+b^{2}}%
+y^{2}=k.
\end{equation*}

Equivalently, we get a translation of a canonical ellipse;

\begin{equation*}
\frac{x^{2}}{a^{2}+3b^{2}}+\frac{(y-\frac{2ab^{2}}{a^{2}+3b^{2}})^{2}}{2a^{2}%
}=\frac{(a^{2}+b^{2})k-2a^{2}b^{2}}{2a^{2}(a^{2}+3b^{2})}+\frac{2b^{4}}{%
(a^{2}+3b^{2})^{2}}.
\end{equation*}

Thus, it is enough to take 
\begin{equation*}
\frac{(a^{2}+b^{2})k-2a^{2}b^{2}}{2a^{2}(a^{2}+3b^{2})}+\frac{2b^{4}}{%
(a^{2}+3b^{2})^{2}}=1,
\end{equation*}

and therefore,%
\begin{equation}
k=\frac{2a^{2}(a^{4}+7a^{2}b^{2}+10b^{4})}{(a^{2}+b^{2})(a^{2}+3b^{2})}.
\label{eq2}
\end{equation}

Translating downward by $\frac{2ab^{2}}{a^{2}+3b^{2}},$ we get that the
ellipse $\frac{x^{2}}{a^{2}+3b^{2}}+\frac{y^{2}}{2a^{2}}=1$ is the locus of
points which have a constant sum of squares of distances from the sides of
the triangle with vertices $A(0,a-\frac{2ab^{2}}{a^{2}+3b^{2}}),B(-b,-\frac{%
2ab^{2}}{a^{2}+3b^{2}}),C(b,-\frac{2ab^{2}}{a^{2}+3b^{2}}).$ Moreover, this
constant is given by equation \ref{eq2}.
\end{proof}

\ 

The following table exhibits some examples;

\begin{equation*}
\begin{array}{ccccccc}
a & b & \alpha ^{2}=a^{2}+3b^{2} & \beta ^{2}=2a^{2} & \text{Ellipse} & k & 
\text{Figure} \\ 
1 & 1 & 4 & 2 & \frac{x^{2}}{4}+\frac{y^{2}}{2}=1 & \frac{9}{2} & \ref{fig5}
\\ 
1 & 2 & 13 & 2 & \frac{x^{2}}{13}+\frac{y^{2}}{2}=1 & \frac{378}{65} & \ref%
{fig6} \\ 
\sqrt{3} & 1 & 6 & 6 & x^{2}+y^{2}=6 & 10 & \ref{fig7}%
\end{array}%
\end{equation*}

Notice that the locus of points in the third example is a circle. In
general, the locus of points is a circle whenever we have $\alpha ^{2}=\beta
^{2}$ or $a^{2}+3b^{2}=2a^{2}$ which is equivalent to $a=\sqrt{3}b.$ In this
case the vertices of the triangle are $A(0,\frac{2a}{3}),B(-b,-\frac{a}{3})$
and $C(b,-\frac{a}{3})$. This implies that the triangle is equilateral.
Therefore, we have the following;

\begin{conclusion}
The locus of points which have a constant sum of squares of distances from
the sides of a given triangle is a circle if and only if the triangle is
equilateral.
\end{conclusion}

\section*{Concluding remarks}

Canghareeb and Sanghareeb are described to be creatures living in the plane
relative to a triangle. We may extend the idea to any $n-$gon. In this case
the creature will be two dimensional with $n$ legs. It may be extended to
polyhedra in which case it will be three dimensional with $n$ legs where $n$
is the number of faces.

Viviani (1622-1703), who was a student and assistant of Galileo, discovered
that equilateral triangles satisfy the following property; the sum of the
distances from the sides of any point inside an equilateral triangle is
constant.

Viviani's theorem can be easily proved by using areas. Joining a point $P$
inside the triangle to its vertices divides it into three parts. The sum of
their areas will be equal to the area of the original one. Therefore, the
sum of the distances from the sides will be equal to the height of the
triangle and the theorem follows.

Kawasaki \cite[p. 213]{Kaw}, with a proof without words, used only rotations
to establish Viviani's theorem.

Samelson \cite[p. 225]{Sam} gave a proof of Viviani's theorem that uses
vectors and Chen \& Liang \cite[p. 390-391]{CL} used this vector method to
prove a converse: if inside a triangle there is a circular region in which
the sum of the distances from the sides is constant then the triangle is
equilateral. In \cite{Abb} this theorem is generalized for convex polygons
(and for polyhedra as well); If inside a convex polygon there are three
non-collinear points for which the sum of the distances from the sides is
constant, then the sum of the distances from the sides for each point inside
the polygon is constant.

This means that if Canghareeb can reach three non-collinear points inside
some triangle then the triangle is equilateral and he can reach every point
inside the triangle. This fact is the clue for solving Example \ref{Ex4}.

\ 

{\large Acknowledgement}: \textit{This work is part of a research which was
held and supported by Beit Berl College Research Fund.}

\bigskip

\bigskip

\bigskip

\bigskip

\FRAME{ftbphFU}{5.6706in}{3.8363in}{0pt}{\Qcb{Canghareeb with head P and
legs $a,b$ and $c$}}{\Qlb{fig1}}{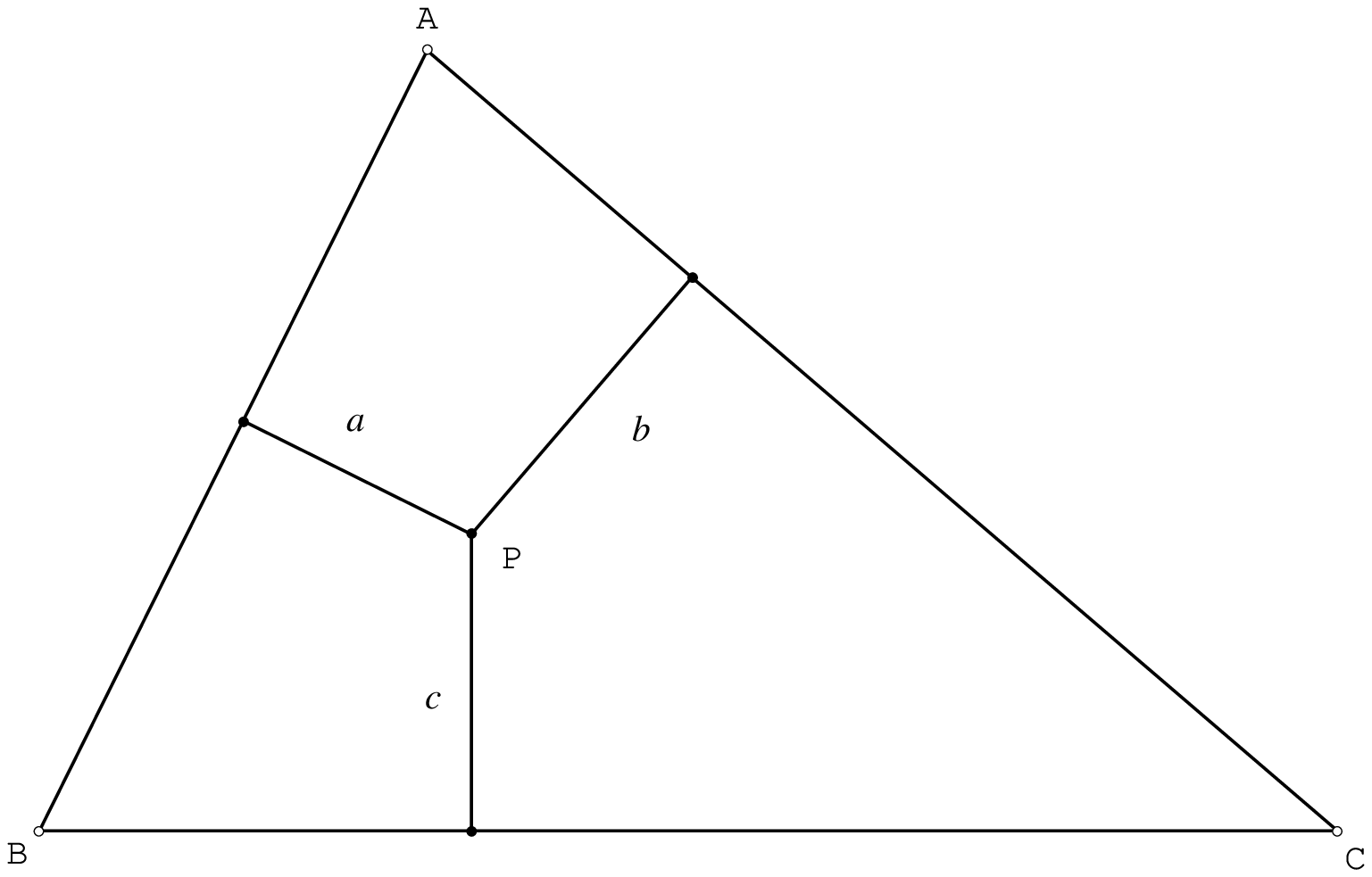}{\special{language
"Scientific Word";type "GRAPHIC";maintain-aspect-ratio TRUE;display
"USEDEF";valid_file "F";width 5.6706in;height 3.8363in;depth
0pt;original-width 10.6692in;original-height 7.2004in;cropleft "0";croptop
"1";cropright "1";cropbottom "0";filename 'cangharib.eps';file-properties
"XNPEU";}}

\FRAME{ftbpFU}{5.3592in}{3.1401in}{0pt}{\Qcb{Canghareeb lives on a segment
parallel to the base of an isosceles triangle}}{\Qlb{fig2}}{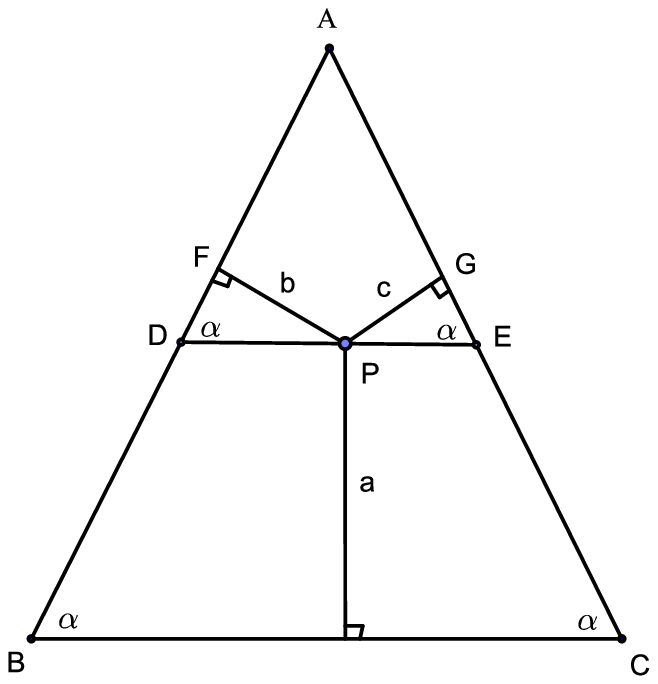}{%
\special{language "Scientific Word";type "GRAPHIC";maintain-aspect-ratio
TRUE;display "USEDEF";valid_file "F";width 5.3592in;height 3.1401in;depth
0pt;original-width 6.4013in;original-height 3.7395in;cropleft "0";croptop
"1";cropright "1";cropbottom "0";filename 'isoceles.eps';file-properties
"XNPEU";}}

\FRAME{ftbpFU}{5.8479in}{3.9548in}{0pt}{\Qcb{Canghareeb the total length of
whose legs $3.16743$, lives on the segment HI parallel to AD inside the
triangle ABC}}{\Qlb{fig3}}{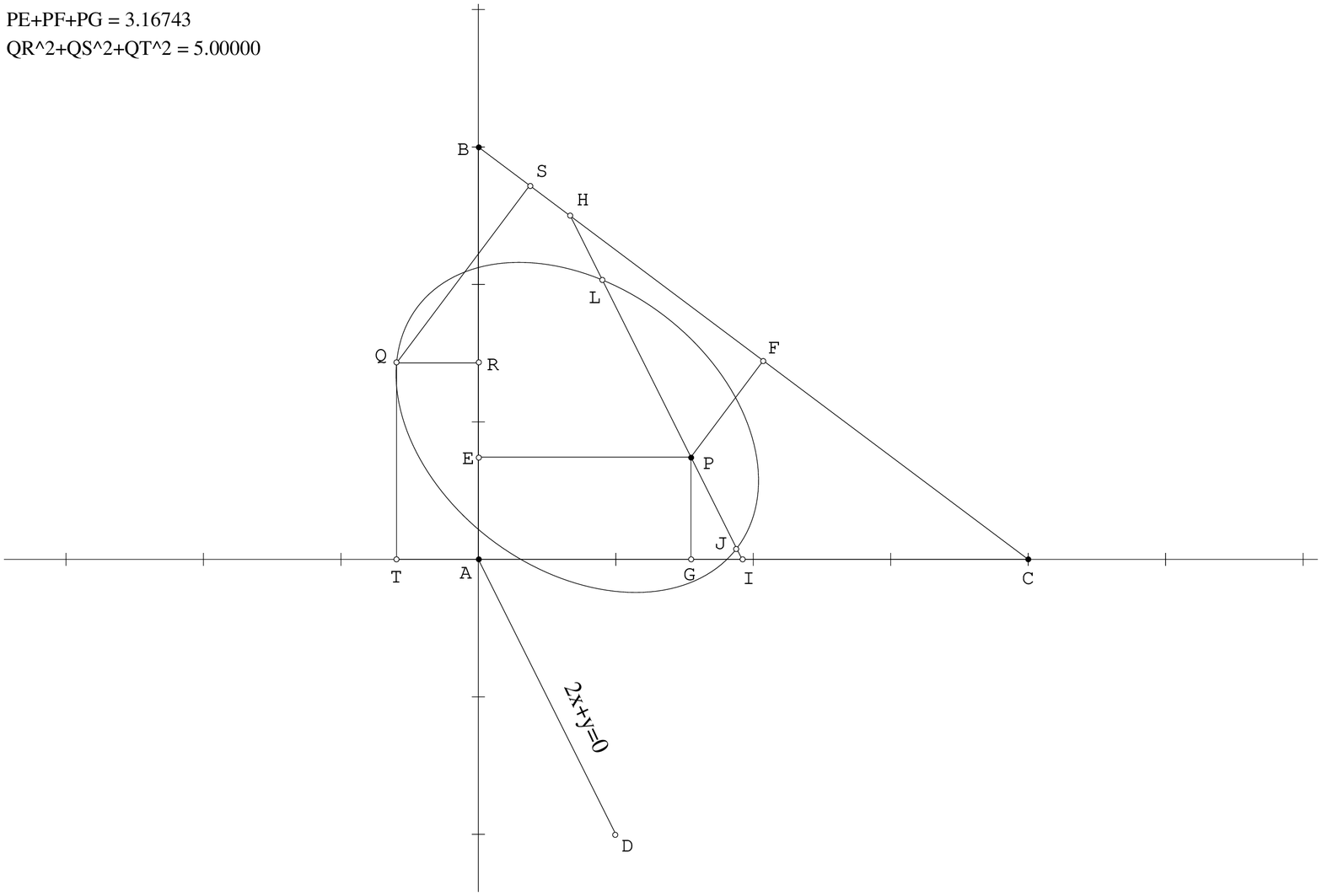}{\special{language
"Scientific Word";type "GRAPHIC";maintain-aspect-ratio TRUE;display
"USEDEF";valid_file "F";width 5.8479in;height 3.9548in;depth
0pt;original-width 10.6692in;original-height 7.2004in;cropleft "0";croptop
"1";cropright "1";cropbottom "0";filename
'rightangletriangle.eps';file-properties "XNPEU";}}

\FRAME{ftbpFU}{5.2918in}{3.096in}{0pt}{\Qcb{Sanghareeb moves along an ellipse%
}}{\Qlb{fig4}}{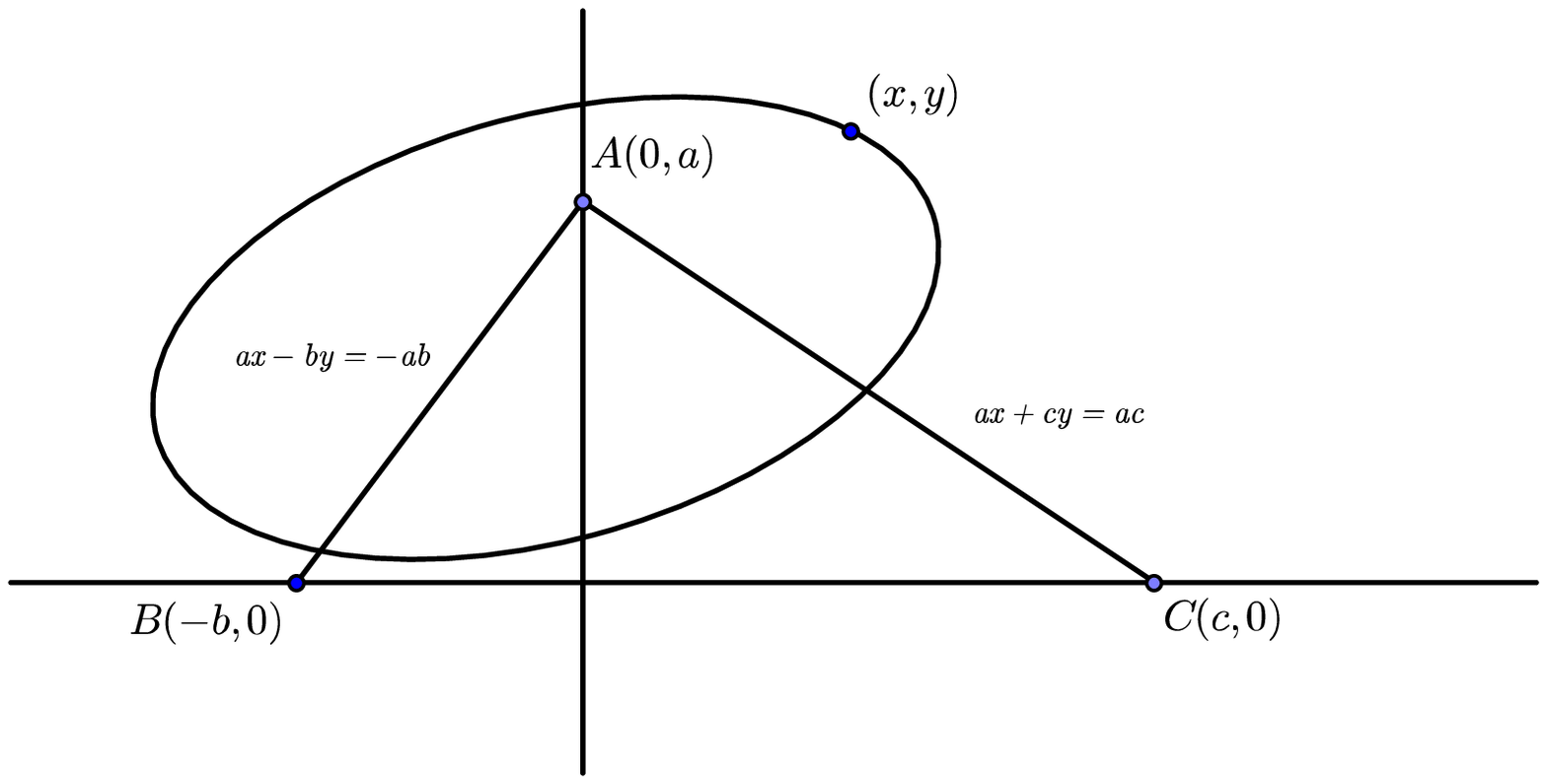}{\special{language "Scientific Word";type
"GRAPHIC";maintain-aspect-ratio TRUE;display "USEDEF";valid_file "F";width
5.2918in;height 3.096in;depth 0pt;original-width 8.0652in;original-height
4.7063in;cropleft "0";croptop "1";cropright "1";cropbottom "0";filename
'ellipse1.eps';file-properties "XNPEU";}}

\FRAME{ftbpFU}{5.7709in}{3.9038in}{0pt}{\Qcb{Sanghareeb $G$ with legs $%
GH,GI,GJ$ moves along an ellipse with $k=4.5$}}{\Qlb{fig5}}{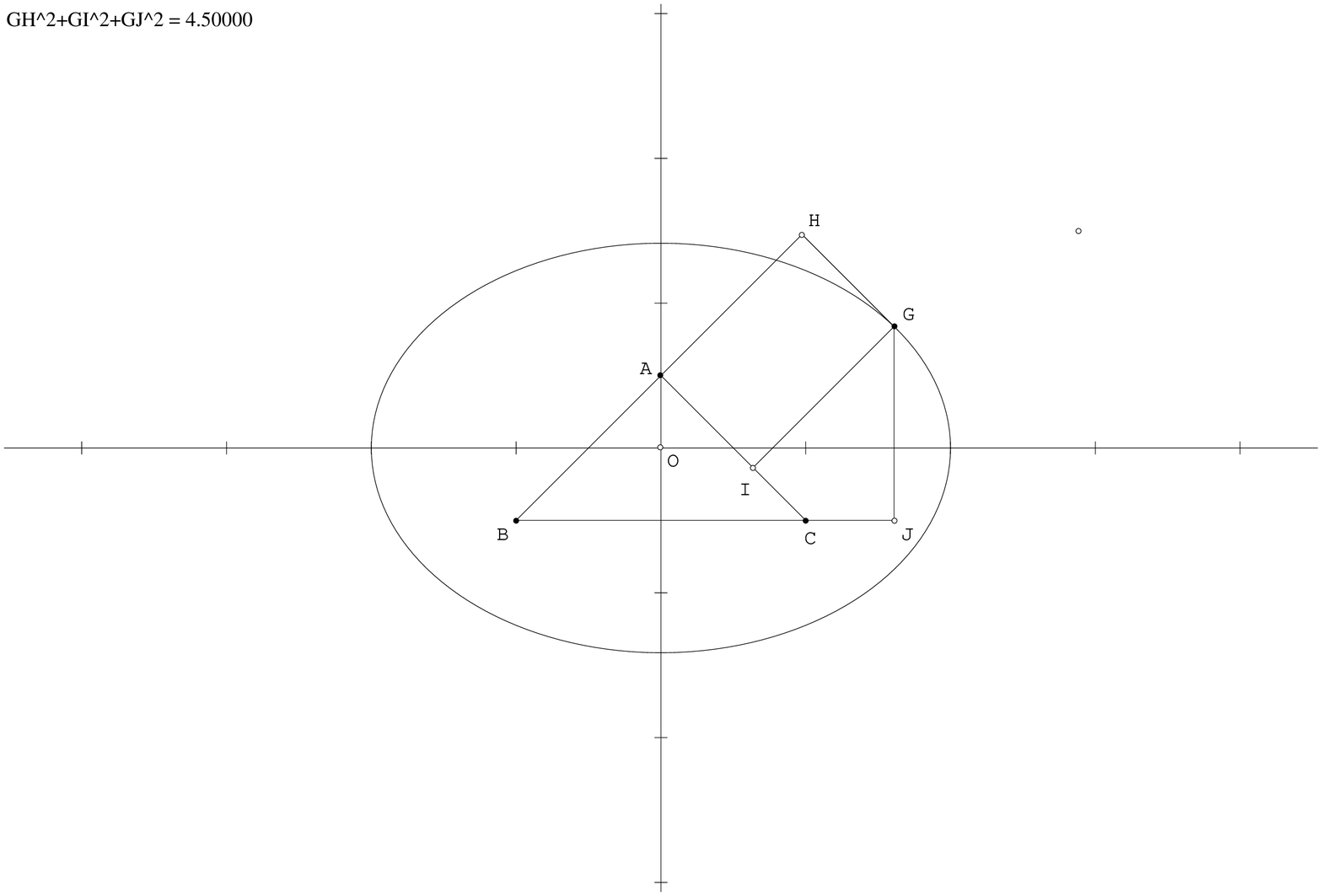}{\special%
{language "Scientific Word";type "GRAPHIC";maintain-aspect-ratio
TRUE;display "USEDEF";valid_file "F";width 5.7709in;height 3.9038in;depth
0pt;original-width 10.6692in;original-height 7.2004in;cropleft "0";croptop
"1";cropright "1";cropbottom "0";filename '1-1.eps';file-properties "XNPEU";}%
}

\FRAME{ftbpFU}{5.8435in}{3.9539in}{0pt}{\Qcb{Sanghareeb $G$ moves along an
ellipse with $k=\frac{378}{65}$}}{\Qlb{fig6}}{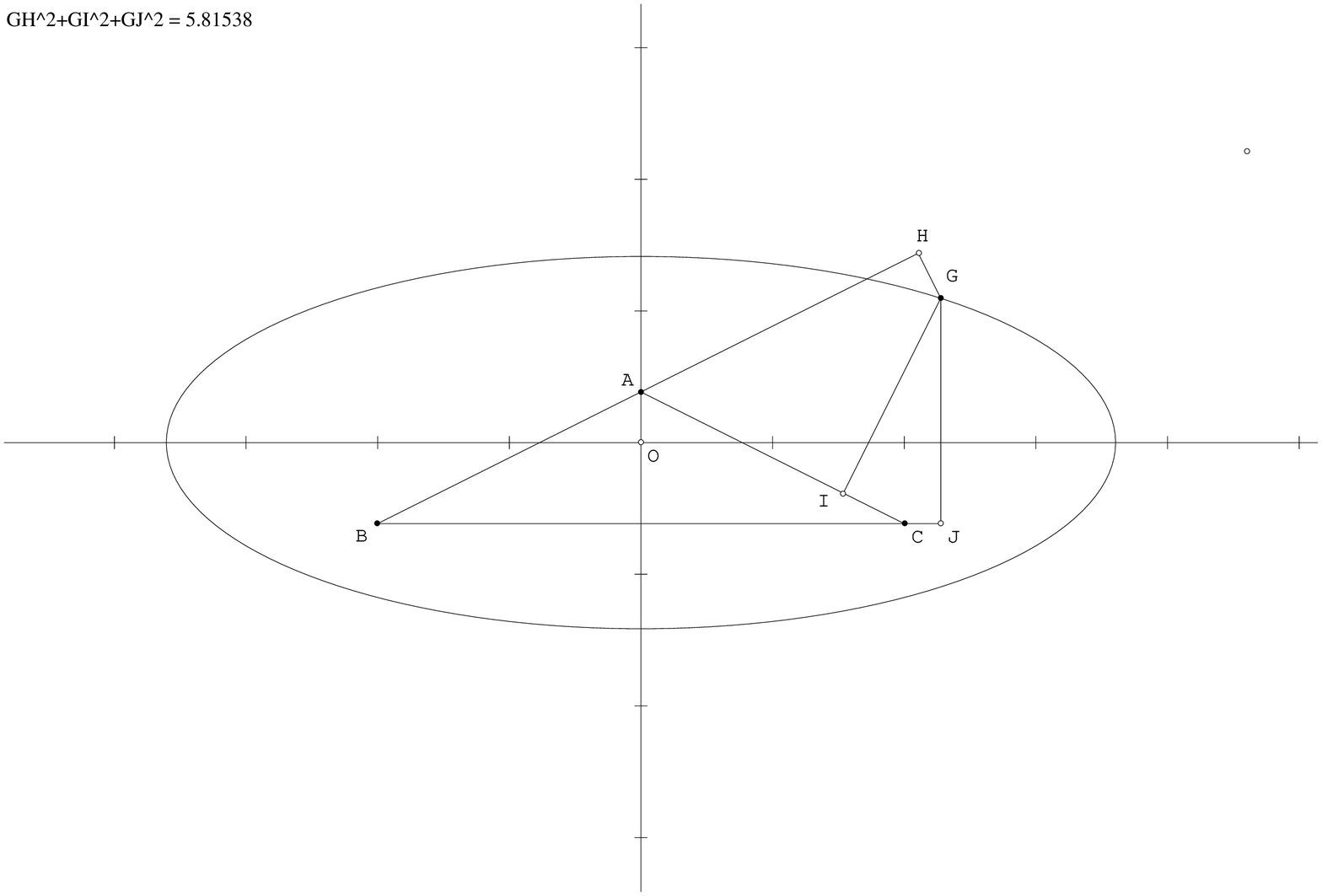}{\special{language
"Scientific Word";type "GRAPHIC";maintain-aspect-ratio TRUE;display
"USEDEF";valid_file "F";width 5.8435in;height 3.9539in;depth
0pt;original-width 10.6692in;original-height 7.2004in;cropleft "0";croptop
"1";cropright "1";cropbottom "0";filename '1-2.eps';file-properties "XNPEU";}%
}

\FRAME{ftbpFU}{5.9897in}{4.0542in}{0pt}{\Qcb{Sanghareeb $G$ moves along a
circle with $k=10$}}{\Qlb{fig7}}{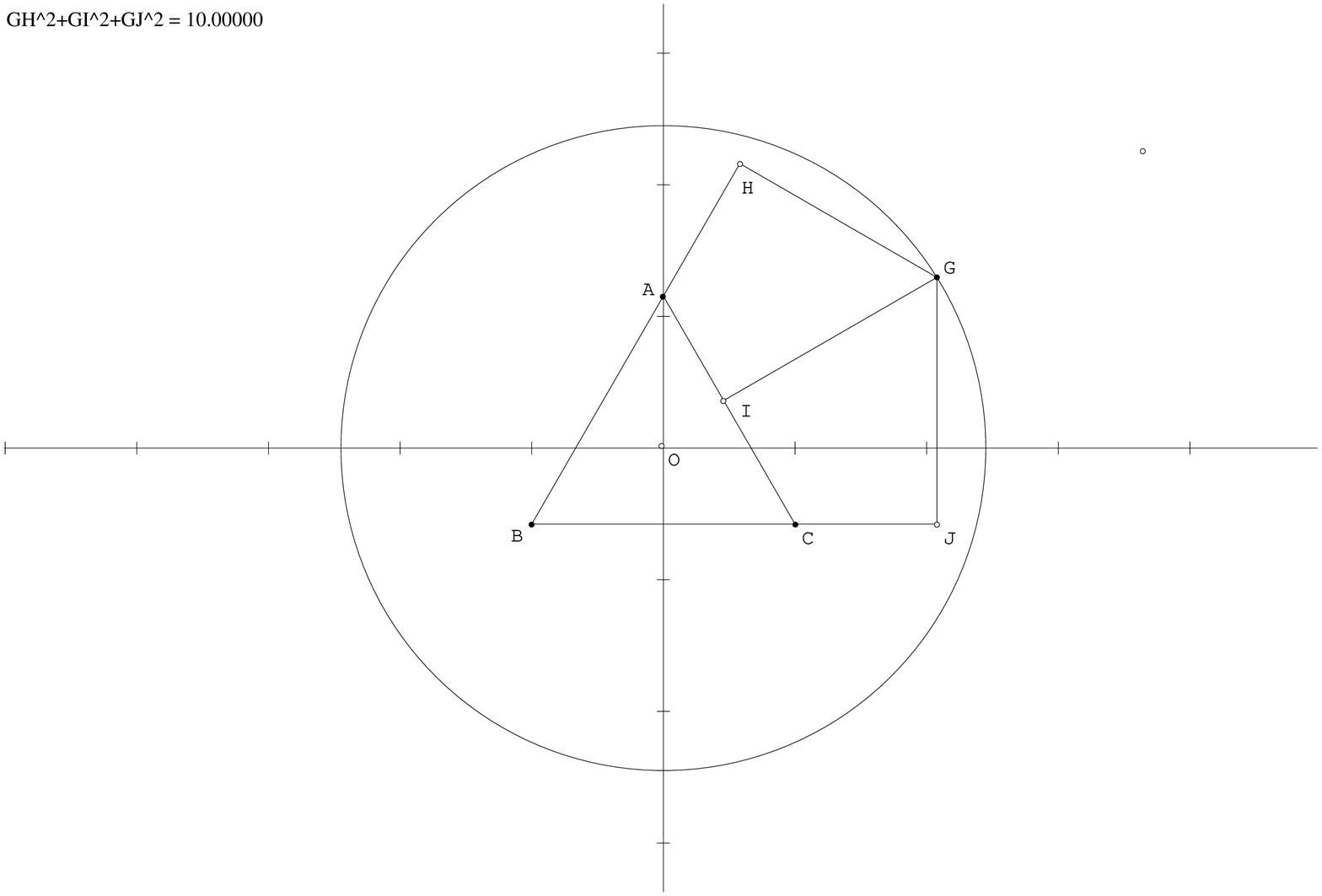}{\special{language "Scientific
Word";type "GRAPHIC";maintain-aspect-ratio TRUE;display "USEDEF";valid_file
"F";width 5.9897in;height 4.0542in;depth 0pt;original-width
10.6692in;original-height 7.2004in;cropleft "0";croptop "1";cropright
"1";cropbottom "0";filename 'circle.eps';file-properties "XNPEU";}}

\hspace{0in}

\end{document}